\newcommand{\cC}{{\mathcal{C}}}
\newcommand{\cD}{{\mathcal{D}}}
\newcommand{\cS}{{\mathcal{S}}}
\newcommand{\Pp}{{\cal{P}}}
\newcommand{\bs}{{\bigskip}}
\newcommand{\noi}{{\noindent}}
\newcommand{\RR}{{\mathbb{R}}}
\newcommand{\CC}{{\mathbb{C}}}
\newcommand{\KK}{{\mathbb{K}}}
\newcommand{\Oo}{{\cal{O}}}
\newcommand{\Qq}{{\cal{Q}}}
\newcommand{\Uu}{{\cal{U}}}
\newcommand{\bum}{{\bf 1}}
\def\qed{\unskip\nobreak\hfil\penalty50\hskip1.75em\null\nobreak\hfil
$\blacksquare$ {\parfillskip=0pt \finalhyphendemerits=0 \par}\medbreak}
\newcommand\diag{\operatorname{diag}}
\newcommand\abs{\operatorname{abs}}
\newcommand\interior{\operatorname{int}}
\newcommand\sgn{\operatorname{sgn}}
\newcommand\tr{\operatorname{tr}}
\newtheorem{lemma}{Lemma}[section]
\newtheorem{theo}[lemma]{Theorem}
\newtheorem{prop}[lemma]{Proposition}
\newtheorem{proposition}[lemma]{Proposition}
\newtheorem{coro}[lemma]{Corollary}
\title{A geometric approach to spectrum interlacing}
\author{Ricardo S. Leite\thanks{Departamento de Matemática, UFES
		(ricardo.leite@ufes.br)}
	\and Carlos Tomei\thanks{Departamento de Matemática, PUC-Rio
		(carlos.tomei@gmail.com)}}
\begin{document}
\maketitle

\begin{abstract}
  We provide a detailed description of the maps associated with spectral
  interlacing in two scenarios, for rank one perturbations and bordering of symmetric and
  Hermitian matrices. The arguments rely on standard techniques of nonlinear
  analysis.
\end{abstract}

\medbreak

{\noindent\bf Keywords:} Spectral interlacing, degree theory.

\smallbreak

{\noindent\bf MSC-class:} 15A29, 15A42, 15B57

\section{Introduction}

We recall two standard results, presented in more detail in \cite{HornJohnson}. Endow $\CC^n$ with the Euclidean inner product $\langle \cdot, \cdot \rangle$  which is anti-linear in the second coordinate. For $v \in \CC^n$, the linear rank one map $v \otimes v = v v^\ast $ is  $u \mapsto \langle u, v \rangle \ v$. Let $S$ be an $n \times n $ Hermitian  matrix, with  ordered eigenvalues $\lambda_1 \le \ldots \le \lambda_n$.

\begin{theo} [Cor. 4.3.9, Th. 4.3.21 \cite{HornJohnson}] \label{standard}  For $v \in \CC^n$, let the eigenvalues of $T = T(v) = S + v \otimes v $ be $\mu_1 \le \ldots \le \mu_n$. Then the eigenvalues of $S$ and $T$ interlace,
\[ \lambda_1 \le \mu_1 \le \lambda_2 \le \mu_2 \le \ldots\le \lambda_n \le \mu_n \ . \]
Conversely, for a sequence $\{\mu_j\}$ interlacing $\{\lambda_k\}$ as above, there is $v \in \CC^n$ for which the eigenvalues of $T= S + v \otimes v$ are $\{\mu_j\}$.
\end{theo}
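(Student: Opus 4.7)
My plan is to reduce to a normal form and then handle both directions via the secular equation. Since only the eigenvalues of $T$ are at stake, I would conjugate by a unitary diagonalizing $S$, replacing $S$ with $\diag(\lambda_1,\ldots,\lambda_n)$ and $v$ with the corresponding rotated vector. Writing any $w \in \CC^n$ as $Dx$ with $D$ diagonal unitary and $x \in \RR^n_{\ge 0}$, and observing that $D$ commutes with the now-diagonal $S$ while $(Dx)\otimes(Dx) = D(x \otimes x)D^\ast$, a second conjugation shows that we may assume $v = (v_1,\ldots,v_n)$ has non-negative real entries. Set $x_i = v_i^2$.

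For the forward direction I would use the matrix-determinant lemma: for $\mu \notin \{\lambda_i\}$,
\[ \det(\mu I - T(v)) \;=\; \det(\mu I - S)\, f(\mu), \qquad f(\mu) \;=\; 1 - \sum_{i=1}^n \frac{x_i}{\mu - \lambda_i}. \]
In the generic regime, where the $\lambda_i$ are pairwise distinct and all $x_i > 0$, $f$ is strictly increasing on each component of $\RR \setminus \{\lambda_i\}$ since its derivative is $\sum_i x_i/(\mu - \lambda_i)^2 > 0$. Combined with the limits $f(\lambda_i^+) = -\infty$, $f(\lambda_i^-) = +\infty$, and $f(\pm\infty) = 1$, this forces exactly one zero in each open interval $(\lambda_i, \lambda_{i+1})$ and one more in $(\lambda_n, +\infty)$, delivering the $n$ eigenvalues $\mu_i$ in the strictly interlacing configuration.

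For the converse, clearing denominators above yields the polynomial identity
\[ \prod_{j=1}^n (\mu - \mu_j) \;=\; \prod_{i=1}^n (\mu - \lambda_i) - \sum_{i=1}^n x_i \prod_{k \ne i}(\mu - \lambda_k), \]
so evaluating at $\mu = \lambda_\ell$ gives the closed formula
\[ x_\ell \;=\; -\,\frac{\prod_{j=1}^n (\lambda_\ell - \mu_j)}{\prod_{k \ne \ell}(\lambda_\ell - \lambda_k)}. \]
A direct sign count using the interlacing inequalities shows that $n-\ell+1$ factors in the numerator and $n-\ell$ factors in the denominator are negative, so with the overall minus sign $x_\ell \ge 0$. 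Setting $v = (\sqrt{x_1}, \ldots, \sqrt{x_n})$ and reading the polynomial identity in reverse confirms that $T(v) = S + v\otimes v$ has exactly the prescribed spectrum.

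I expect the main obstacle to be the non-generic cases: coincidences among the $\lambda_i$, vanishing $x_i$, or interlacing inequalities that fail to be strict. In these situations the denominator in the formula for $x_\ell$ can vanish and the sign analysis of $f$ must be refined. The cleanest resolution is a density/continuity argument (perturb to the generic regime, apply the analysis, and pass to the limit while tracking multiplicities via the explicit factorization of the characteristic polynomial). A more global alternative, closer to the spirit suggested by the paper's abstract, is to view $\Psi \colon \RR^n_{\ge 0} \to \RR^n$, $x \mapsto (\mu_1(x), \ldots, \mu_n(x))$, as a proper continuous map into the closed interlacing polyhedron and deduce surjectivity from a degree computation, which simultaneously supplies the injectivity information needed to describe the map geometrically.
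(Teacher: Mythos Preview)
Your argument is correct. The reduction to diagonal $S$ with nonnegative real $v$, the secular function analysis for the forward direction, the explicit formula
\[
x_\ell \;=\; -\,\frac{\prod_{j}(\lambda_\ell-\mu_j)}{\prod_{k\ne\ell}(\lambda_\ell-\lambda_k)}
\]
for the converse, and the sign count all go through as you describe; the non-generic cases are indeed handled by continuity, as you note.

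However, this is not the route the paper takes. The paper does not use the secular equation or the matrix-determinant lemma at all. Instead it deduces Theorem~\ref{standard} as a corollary of Theorem~\ref{teorema}: it shows directly that the map $F\colon\Oo_Q\to\Pp_F$, $v\mapsto\sigma_o(S+v\otimes v)$, is a homeomorphism by verifying the hypotheses of a general real-analysis lemma (properness, nonvanishing Jacobian in the interior, and a homeomorphism on the boundary proved by induction on $n$), then invoking degree theory to conclude bijectivity. The interlacing inequalities and their converse are then read off from the fact that the image is exactly the box $\Pp_F$; the case of repeated $\lambda_i$ is obtained by passing to the limit. Your approach yields an explicit closed-form inverse and is entirely elementary, which the paper's argument does not provide; conversely, the paper's method gives the stronger conclusion that $F$ is a global homeomorphism (and a diffeomorphism on the interior), explains the ``creasing'' of the orthant faces onto the faces of $\Pp_F$, and runs in parallel for the bordering map $G$ without a separate computation. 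Your final paragraph already anticipates this alternative viewpoint.
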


The second result is Cauchy's interlacing theorem.

\begin{theo} [Th. 4.3.17 \cite{HornJohnson}] \label{Cauchy} Let $v \in \CC^n$, $c \in \RR$, $\mu_1 \le \mu_2 \le \ldots \le \mu_{n+1}$ be the eigenvalues of the bordered matrix
\[
T = T(v,c)=\begin{pmatrix}
	S & v^\ast \\
	v & c
\end{pmatrix}.
\]
Then the eigenvalues of $S$ and $T$ interlace,
\[ \mu_1 \le \lambda_1 \le \mu_2 \le \lambda_2 \le \mu_3 \le \ldots \le \lambda_n \le \mu_{n+1} \ . \]
For a sequence $\{\mu_j\}$ interlacing $\{\lambda_k\}$, there exist $v \in \CC^n$ and $c >0$  for which the eigenvalues of $T = T(v,c)$ are $\{\mu_j\}$.
\end{theo}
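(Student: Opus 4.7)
The plan is to prove the two claims separately. The interlacing inequalities follow from the Courant--Fischer min--max characterization: since $S$ is the compression of $T$ to the subspace $\CC^n \oplus \{0\} \subset \CC^{n+1}$, one has
\[
\mu_k(T) = \min_{\dim V = k} \max_{0 \ne x \in V} \frac{\langle Tx, x\rangle}{\|x\|^2} \;\le\; \min_{\substack{\dim V = k\\ V \subset \CC^n}} \max_{0 \ne x \in V} \frac{\langle Sx, x\rangle}{\|x\|^2} = \lambda_k(S),
\]
and the dual max--min form gives $\lambda_k(S) \le \mu_{k+1}(T)$. This direction is routine.

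For the converse I would first reduce to the diagonal case by conjugating $T(v,c)$ with $U \oplus 1$, where $S = U \Lambda U^*$ and $\Lambda = \diag(\lambda_1, \ldots, \lambda_n)$; this replaces $v$ by $w = U^*v$ and preserves both $c$ and the spectrum. Applying the Schur complement to the resulting matrix $\tilde T(w,c)$ yields, for $\mu \notin \{\lambda_i\}$,
\[
\det(\tilde T(w,c) - \mu I) = \prod_i (\lambda_i - \mu)\Bigl(c - \mu - \sum_i \frac{|w_i|^2}{\lambda_i - \mu}\Bigr).
\]
Equating this with the prescribed $\det(\tilde T - \mu I) = \prod_j(\mu_j - \mu)$ and dividing by $\prod_i(\lambda_i - \mu)$ reduces the problem to the partial-fraction identity
\[
\frac{\prod_j(\mu_j - \mu)}{\prod_i(\lambda_i - \mu)} = c - \mu - \sum_i \frac{|w_i|^2}{\lambda_i - \mu}.
\]
Reading off the polynomial and residue parts gives $c = \sum_j \mu_j - \sum_i \lambda_i$ and $|w_i|^2 = (-1)^n \prod_j(\mu_j - \lambda_i)/\prod_{k \ne i}(\lambda_i - \lambda_k)$.

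The main step is verifying these residues are nonnegative, a sign count on the interlacing: for fixed $i$, the $i$ factors $\mu_j - \lambda_i$ with $j \le i$ are $\le 0$ (since $\mu_j \le \lambda_j \le \lambda_i$) while the remaining $n+1-i$ are $\ge 0$ (since $\mu_j \ge \lambda_{j-1} \ge \lambda_i$), producing sign $(-1)^i$; combined with the prefactor $(-1)^n$ and the denominator sign $(-1)^{n-i}$, the total sign is $(-1)^{2i} = +1$, as required. Degenerate cases, in which interlacing equalities or repeated $\lambda_i$ force the corresponding $w_i$ to vanish, are directly accommodated by the construction and can also be accessed by continuity from the strict generic case. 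The subtlest point of the statement is the asserted positivity $c > 0$: since the trace identity pins $c = \sum_j \mu_j - \sum_i \lambda_i$ down uniquely, this would require an additional observation, most naturally a simultaneous translation $S, T \mapsto S + tI, T + tI$, which preserves the interlacing data while shifting $c$ by $t$.
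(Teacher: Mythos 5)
Your proof is correct but follows a genuinely different route from the paper's. You handle the forward inequalities by Courant--Fischer and the converse by the secular equation coming from the Schur complement,
\[
\frac{\prod_j(\mu_j-\mu)}{\prod_i(\lambda_i-\mu)}\ =\ c-\mu-\sum_i\frac{|w_i|^2}{\lambda_i-\mu}\ ,
\]
whose polynomial part determines $c$ and whose residues determine $|w_i|^2$; nonnegativity of the residues follows from the interlacing via the sign count you perform. The paper instead deduces Theorem~\ref{Cauchy} from Theorem~\ref{teorema}, which shows that $(v,c)\mapsto\sigma_o(T(v,c))$ is a homeomorphism from $\Oo_Q\times\RR$ onto the box $\Pp_G$; this is established via the degree-theoretic Lemma~\ref{lemma:topo}, Propositions~\ref{prop:fronteirabord} and~\ref{prop:H3G}, and an inductive analysis of how the boundary of the orthant is ``creased'' onto the faces of $\Pp_G$, with repeated $\lambda_i$ handled afterwards by a limiting argument. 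Your route is shorter and more elementary, and the explicit formulas for $c$ and $|w_i|^2$ implicitly recover the preimage picture of Theorem~\ref{teoremadois}; what it does not deliver, and the paper's machinery does, is the homeomorphism/diffeomorphism structure and the face-creasing geometry.

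On $c>0$: your observation is the right one and in fact exposes a slip in the statement. The trace identity $c=\sum_j\mu_j-\sum_k\lambda_k$ pins $c$ down uniquely, and it need not be positive: take $\mu_{j+1}=\lambda_j$ for $j=1,\dots,n$ and $\mu_1$ far below $\lambda_1$, giving $c=\mu_1<0$. The condition should read $c\in\RR$, matching the domain $\Oo_Q\times\RR$ of $G$ in Theorem~\ref{teorema}, whose proof makes no positivity claim. The simultaneous translation $S,T\mapsto S+tI,\,T+tI$ that you mention cannot rescue the literal $c>0$ claim, since it changes $S$ and hence the problem; it only shows the construction is translation-covariant. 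Aside from this, your treatment of the degenerate cases by a continuity/limit argument is standard and matches what the paper itself does.
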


The results can be found in essentially any advanced book on linear algebra. Extensions abound. Ionascu \cite{Ionascu} indicates a number of references (from which we emphasize \cite{Vasudeva}) proving related results for compact self-adjoint operators $S$ on a separable Hilbert space. Simon \cite{Simon} considers finer aspects of the spectrum of rank one perturbations of (mostly) Schrödinger operators with very interesting applications, which are beyond the scope of this text.

\medskip
In this text,  we cast these results in geometric terms.
Fix  a normalized eigenbasis $\Qq = [q_1, \ldots, q_n]$  of $S$, arranged as columns of a unitary matrix $Q$.
Let  $\Oo_Q \in \CC^n$, the \textit{positive orthant associated with $\Qq$}, be
the set of  vectors of the form $Q p$, where $p \in \RR^n$ has nonnegative entries.

A vector $v \in \RR^n$ is {\it ordered} if $v_1 \le \ldots \le v_n$.
Write $\sigma_o(S) \in \RR^n$ for the ordered vector with entries given by the eigenvalues of $S$. For $r > 0$, define the polytopes
\[ \Pp_F= [\lambda_1,\lambda_2] \times [\lambda_2,\lambda_3] \times \ldots
\times [\lambda_n, \infty)  \ , \]
\[  \Pp_G= (-\infty, \lambda_1] \times [\lambda_1,\lambda_2] \times
[\lambda_2,\lambda_3] \times \ldots \times [\lambda_n, \infty) \ , \]
two half-open boxes, and the sphere
$ \cS(r) =  \{ v \in \CC^n, \| v \| = r \} $.
\begin{theo} \label{teorema} Let $S$ be an $n \times n $ Hermitian matrix, with  spectrum $\lambda_1 < \ldots < \lambda_n$, an eigenbasis $\Qq$ and positive orthant $\Oo_Q$. The following maps are homeomorphisms, and diffeomorphisms between the interior of their domain and image.
\begin{align*}F : \cD_F = \Oo_Q  &\to \ \Pp_F\hspace*{1.1cm} , \hspace*{0.8cm}
	G :  \cD_G \ = \ \Oo_Q \times \RR \to \ \Pp_G\\
	v \  \mapsto \ &\sigma_o (S +  v \otimes v)
	\hspace*{1.8cm} ( v , c ) \  \mapsto \sigma_o (T(v,c))
\end{align*}
and their restrictions
\[ F^r: \cD_F^r = \Oo_Q \cap \cS(r) \to \Pp_F^r = \Pp_F\cap \{ \mu \in \RR^n, \
\ \sum_j \mu_j \ = \ r^2 + \sum_j \lambda_j  \}\ , \]
\[ \quad G^c:  \Oo_Q \times \{c\} \to  \Pp_
G^c = \Pp_G\cap \{ \mu \in \RR^{n+1},\  \sum_j \mu_j \ = \  \sum_k \lambda_k + c \}  \ , \]
\[ \quad G^{r,c}: (\Oo_Q \cap (\cS(r)) \times \{c\} \to  \Pp_G^c \cap \{ \mu \in \RR^{n+1},\  \sum_j \mu_j^2 \ = \  \sum_k \lambda_k^2 + 2 r^2 + c^2 \} . \]

\end{theo}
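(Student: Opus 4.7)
The plan is to use the eigenbasis $\Qq$ to diagonalize the problem and then invert the secular equation by residues. Writing $S = Q\Lambda Q^\ast$ with $\Lambda = \diag(\lambda_1,\ldots,\lambda_n)$ and $v = Qp$ with $p = (p_1,\ldots,p_n) \in \RR^n_{\ge 0}$ (the defining parametrization of $\Oo_Q$), the matrix $S + v\otimes v$ is unitarily similar to $\Lambda + pp^T$, and $T(v,c)$ is unitarily similar to the real symmetric bordered matrix with $\Lambda$ as its $n\times n$ block and border vector $p$. Since spectra are invariant under unitary similarity, $F$ and $G$ factor through $p \mapsto Qp$, which is itself a homeomorphism $\RR^n_{\ge 0} \to \Oo_Q$ (diffeomorphism on interiors), so it suffices to analyze the induced real maps.

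\textbf{Explicit inverse via the secular equation.} A direct cofactor expansion gives
\[
\det(\mu I - \Lambda - pp^T) \ = \ \prod_{k=1}^n(\mu-\lambda_k) - \sum_{k=1}^n p_k^2 \prod_{\ell\ne k}(\mu-\lambda_\ell).
\]
If the eigenvalues are $\{\mu_j\}$, equating this monic polynomial with $\prod_j(\mu-\mu_j)$ and evaluating at $\mu=\lambda_k$ yields the inversion
\[
p_k^2 \ = \ -\,\frac{\prod_j(\lambda_k-\mu_j)}{\prod_{\ell\ne k}(\lambda_k-\lambda_\ell)},
\]
which is uniquely determined by $\mu$ and nonnegative by a sign count using the interlacing inequalities; it vanishes exactly on the faces of $\Pp_F$ where interlacing becomes non-strict and is strictly positive on the interior. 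Combined with Theorem \ref{standard} this identifies $F$ as a continuous bijection with continuous inverse $\mu \mapsto p \mapsto Qp$. The bordering case proceeds identically from the Schur-complement identity $\det(\mu I - T) = (\mu-c)\prod_k(\mu-\lambda_k) - \sum_k p_k^2\prod_{\ell\ne k}(\mu-\lambda_\ell)$: the trace identity $c = \sum_j\mu_j - \sum_k\lambda_k$ extracts $c$, after which the same residue at $\lambda_k$ recovers $p_k^2$.

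\textbf{Smoothness, restrictions, and main obstacle.} On the interior all $p_k>0$ and strict interlacing holds, so $F,G$ are smooth (ordered eigenvalues are smooth functions of Hermitian matrices with simple spectrum) and the explicit inverses are smooth in $\mu$; as smooth bijections with smooth inverses they are diffeomorphisms between the interiors. The restricted maps are handled by conserved invariants: $\sum_j\mu_j = \tr S + \|v\|^2$ cuts out $\Pp_F^r$ inside $\Pp_F$ and $\Pp_G^c$ inside $\Pp_G$, while $\sum_j\mu_j^2 = \tr S^2 + 2\|v\|^2 + c^2$ accounts for the further level set in $G^{r,c}$; each restricted map is therefore the one induced between matching smooth level submanifolds and inherits the homeomorphism/diffeomorphism property. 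The main obstacle is the sign analysis establishing nonnegativity of the inverse formula under the interlacing hypothesis, and confirming that the polynomial identity is unambiguous: in the $F$ case both sides are monic of degree $n$ and agree at the $n$ points $\lambda_k$, so their difference has degree $<n$ with $n$ roots and vanishes; in the $G$ case the additional condition on the $\mu^n$ coefficient is exactly the trace equation that determines $c$.
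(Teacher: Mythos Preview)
Your argument is correct and takes a genuinely different route from the paper's.  The paper proceeds topologically: it packages properness, Jacobian invertibility on the interior, and a degree count into Lemma~\ref{lemma:topo}, and then verifies the boundary hypothesis (H4) by an induction on $n$ that tracks how each face $E_i$ of the orthant is \emph{creased} by $F$ (resp.\ $G$) onto two adjoining faces of $\Pp_F$ (resp.\ $\Pp_G$).  You instead invert the secular equation explicitly: the residue formula $p_k^2=-\prod_j(\lambda_k-\mu_j)/\prod_{\ell\ne k}(\lambda_k-\lambda_\ell)$ gives a closed-form inverse, and your interpolation argument (matching two monic polynomials at the $n$ nodes $\lambda_k$, plus the $\mu^n$ coefficient in the bordered case) confirms that this candidate really reproduces the prescribed spectrum.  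Your approach is shorter, constructive, and immediately yields the numerical inverse the paper alludes to in the introduction; on the other hand it consumes Theorems~\ref{standard} and~\ref{Cauchy} as input, whereas the paper treats Theorem~\ref{teorema} as logically prior and recovers those as corollaries, and the paper's inductive boundary analysis is what produces the geometric creasing picture (Figure~\ref{figura}) and a reusable template for maps lacking an explicit inverse.  One line you might add for clarity in the $G$ case: the difference of the two monic degree-$(n{+}1)$ polynomials, vanishing at $\lambda_1,\dots,\lambda_n$, must equal $\alpha\prod_k(\mu-\lambda_k)$, and the trace identity is exactly the statement $\alpha=0$.
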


If $S$ has distinct eigenvalues, Theorems \ref{standard} and \ref{Cauchy} then follow. Full generality is attained by taking limits.

The general rank one
Hermitian perturbation matrix is of the form $c v \otimes v $ for a real unit
vector $v$ and $c \in \RR$. The sign of $c$ specifies if the perturbation of $S$
pushes the spectrum to the right (the case $c >0$) or to the left ($c <0$). For the results above,  $c \ge 0$: minor alterations
handle $c \le 0$. Clearly, the interlacing property is associated with the geometry of the polytopes $\Pp_F$ and $\Pp_G$.

It is rather intriguing that the interior of an orthant $\Oo_Q$ is taken by $F$ to $\Pp_F$, a closed box with a face removed. As we shall see in the proof, some faces of $\Oo_Q$ are {\it creased} by $F$, giving rise to two faces of $\Pp_Q$. Something similar happens with $G$, but now $\Pp_G$ is a box with two faces  removed.

This is what happens for $n=2$. The horizontal axis is taken to the union of a horizontal and a vertical segment. The vertical axis is sent to itself.

\begin{figure} [h]
  \begin{center}
    \includegraphics[width=350pt]{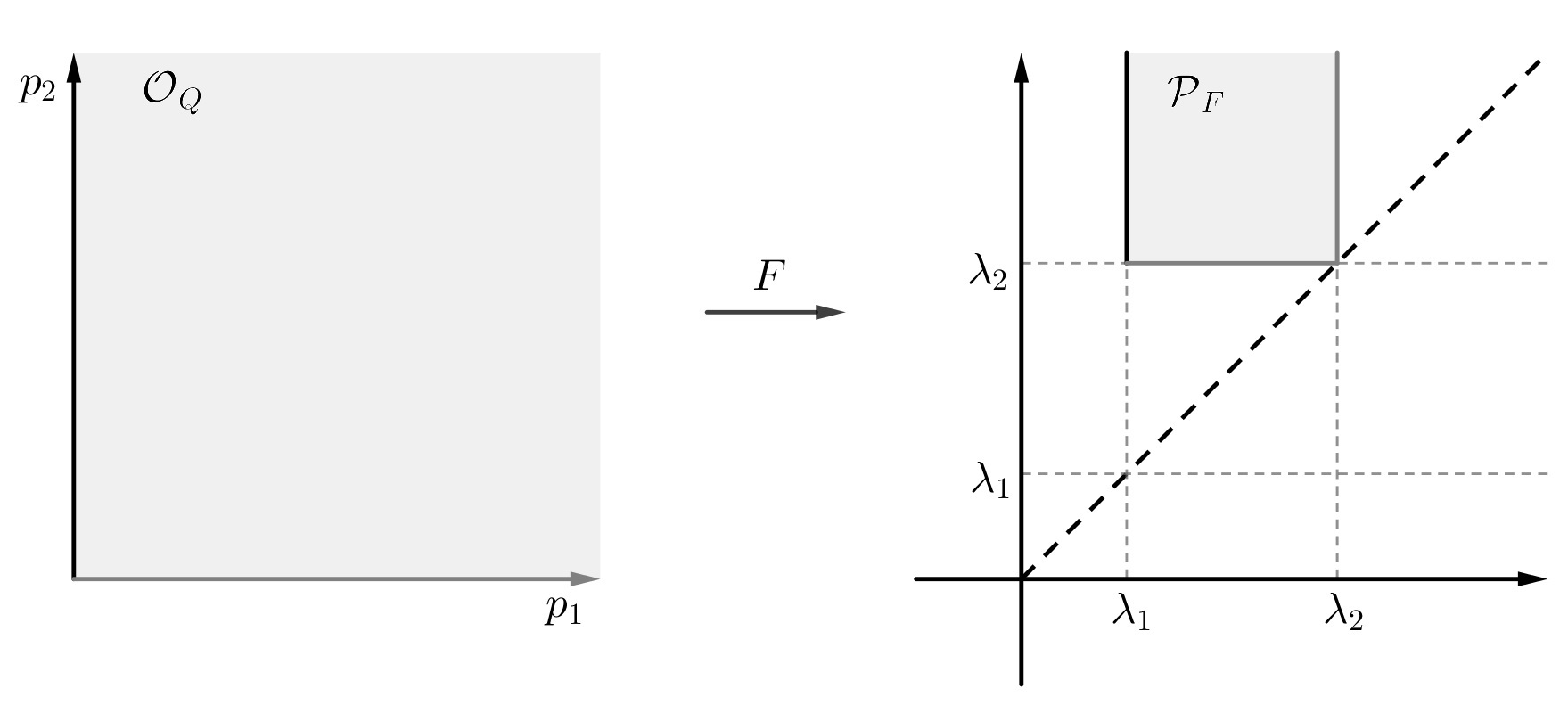}
    \caption{$F: \cD \to \RR^2$}
    \label{figura}
  \end{center}
\end{figure}

\medskip
The simple geometry of the maps $F$ and $G$ has implications to the computation of their inverses, frequently described as an inverse problem (\cite{HornJohnson}). For $F$, given a symmetric matrix $S$ with ordered, simple, spectrum $\lambda$ and an interlacing ordered
$n$-tuple $\mu$,
we look for a rank one perturbation $c v \otimes v$ such that the spectrum of
$ S + c v \otimes v$ is $\mu$.
Theorem \ref{teorema} shows that, in principle, the problem is solvable by numerical continuation starting from any interior point  of $\cD_F$, as
there are no critical values there. The same argument proves that continuation from an interior point of $\cD_G$ obtains the inverse of $G$.

\bigskip

Given a function $f: X \to Y$, the {\it preimages} of $y \in Y$ are the points in the set $f^{-1}(y)= \{ x \in X , \ f(x) = y\}$.
We now consider the preimages of the maps in the previous theorem. We have to distinguish between matrices with real or complex entries. Let $\KK = \RR$ or $\CC$ and define
\[ \abs: \KK^n \to \Oo_Q , \quad v = \sum_{j=1}^n c_j\ q_j \ \mapsto \
\sum_{j=1}^n |c_j| \ q_j \ . \]

\begin{theo} \label{teoremadois} Let $S$ as in the previous theorem.
\begin{enumerate}
\item   Say the entries of $S$ lie in $\KK = \RR$ or $\CC$, $v \in \KK^n$. Then $F$ and $G$ extend to
\begin{align*} \hat F : \KK^n  &\to \ \Pp_F\hspace*{1.0cm} , \hspace*{0.8cm}
	\hat G :  \KK^n \times \RR \to \ \Pp_G \hspace*{1.3cm} . \\
	v \  \mapsto \ &\sigma_o (S +  v \otimes v)
	\hspace*{1.8cm} ( v , c ) \  \mapsto \sigma_o (T(v,c))
\end{align*}
Moreover, $\hat  F(v)= \hat  F( w) \Leftrightarrow \hat  G(v)= \hat  G( w) \Leftrightarrow \abs(v) = \abs(w)$. In particular, all preimages of a point belong to the same sphere $\cS(r)$.
\item If $\KK = \RR$, a point $\mu \in \Pp_F$ belonging to exactly $k$ faces has $2^{n-k}$ preimages under $\hat  F$ or $\hat  G$. If $\KK = \CC$, for both functions the preimages of $\mu$ form a product of $n-k-1$ circles: a torus.
\end{enumerate}
\end{theo}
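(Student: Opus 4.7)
The plan is to reduce Theorem \ref{teoremadois} to Theorem \ref{teorema} by exploiting that the component-wise phases of $v$ in the eigenbasis of $S$ are spectrally invisible. The extensions $\hat F, \hat G$ are defined by the same formulas, which remain meaningful for any $v \in \KK^n$. The core algebraic identity for part 1 is $\hat F(v) = F(\abs(v))$, and analogously $\hat G(v, c) = G(\abs(v), c)$. Writing $v = Q\alpha$ in the eigenbasis and factoring $\alpha_j = e^{i\theta_j}|\alpha_j|$, set $D = \diag(e^{i\theta_1}, \ldots, e^{i\theta_n})$. Then $\alpha\alpha^* = D \, |\alpha| \, |\alpha|^* \, D^*$, and since $D$ commutes with $\Lambda = \diag(\lambda_1, \ldots, \lambda_n)$, one has $\Lambda + \alpha\alpha^* = D(\Lambda + |\alpha| \, |\alpha|^*)D^*$: unitarily equivalent, hence cospectral. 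Applying the same reasoning to the block-diagonal conjugation $\diag(D, 1)$ (which fixes the bordering corner $c$) yields the identity for $\hat G$. Combined with the injectivity of $F|_{\Oo_Q}$ and $G|_{\Oo_Q \times \RR}$ furnished by Theorem \ref{teorema}, this gives $\hat F(v) = \hat F(w) \Leftrightarrow \abs(v) = \abs(w)$; for $\hat G$, one additionally recovers $c = \sum_j \mu_j - \sum_j \lambda_j$ from the trace. The ``same sphere'' conclusion is then immediate from $\|v\|^2 = \sum_j |\alpha_j|^2 = \|\abs(v)\|^2$.

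For part 2, fix $\mu \in \Pp_F$, let $p = F^{-1}(\mu) = \sum_j p_j q_j \in \Oo_Q$, and set $J = \{j : p_j = 0\}$. By part 1, the preimages of $\mu$ under $\hat F$ are exactly the vectors $v = \sum_j \alpha_j q_j$ satisfying $|\alpha_j| = p_j$ for every $j$. In $\KK = \RR$, each nonzero coordinate offers two sign choices while each zero coordinate is forced, yielding $2^{n - |J|}$ preimages. In $\KK = \CC$, each nonzero coordinate traces a circle of radius $p_j$, producing a product of $n - |J|$ circles. The count for $\hat G$ is identical, since $c$ is pinned down by the trace.

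The main obstacle is matching $|J|$ with the integer $k$ in the statement: the relevant face stratification on $\Pp_F$ is the one pulled back from the orthant $\Oo_Q$ via the homeomorphism $F$, whose faces are indexed by subsets of vanishing coordinates, and the codimension of the open face through $\mu$ is precisely $|J|$. Under this identification, the real formula $2^{n-k}$ matches directly, while the stated complex exponent $n - k - 1$ is best read as the count after quotienting by the global phase action $v \mapsto e^{i\phi}v$ (under which $v \otimes v$, and hence $\hat F$, is invariant); the literal preimage being a torus of dimension $n - k$. Beyond this reconciliation, the algebraic content is elementary and the bulk of the work is absorbed into Theorem \ref{teorema}.
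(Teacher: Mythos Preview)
Your approach is essentially the paper's: factor $\hat F = F \circ \abs$ (and similarly for $\hat G$) and then invoke the bijectivity of $F$ from Theorem~\ref{teorema}; the paper's Section~\ref{provateoremadois} does exactly this in three sentences, without spelling out the unitary-conjugation argument you supply. Your observation about the exponent is apt: the paper's own proof (``each nonzero coordinate $v_k$ gives rise to a circle'') yields a product of $n-k$ circles, so the $n-k-1$ in the statement is either a slip or, as you suggest, tacitly refers to the quotient by the global phase.
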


Recently, Maciazek and Smilansky \cite{MS} considered analogous inverse problems and pointed out the relevance of discrete information provided by strings of signs. We believe our presentation sheds some light on the issue.

\medskip

Theorems \ref{teorema} and \ref{teoremadois} are a strengthened version of a {\it very} special case of the celebrated Horn's conjecture \cite{Horn2}, whose resolution, after work by several authors (\cite{Kl}, \cite{HR}, \cite{KT2}, \cite{KTW}),  is beautifully described in \cite{KT}. The conjecture answers a question by Weyl \cite{W}: what are the possible spectra of the sum $A+B$ of two Hermitian matrices of given spectrum? Horn originally provided a list of linear inequalities on the eigenvalues of the three matrices which provide necessary and sufficient conditions relating their spectra. %The proof of the conjecture relies on a visual interpretation by Knudson and Tao (\cite{KT2}, \cite{KTW}) of Horn's inequalities  in terms of the so called {\it honeycombs}.
For $A = S$, $B = v \otimes v$ with $\| v \| = r$, Horn's conjecture states that the image of the map $F^r$ is indeed $\Pp_F^r$.

Part of the statements in Theorems \ref{teorema} and \ref{teoremadois}
may be deduced from the sophisticated machinery of
symplectic geometry. To give an idea of a more familiar context, the Schur-Horn theorem for Hermitian
matrices \cite{Horn1} is a consequence of the powerful theorems about the convexity of the
image of moment maps of torus actions by Atiyah (\cite{Atiyah}) and
Guillemin-Sternberg (\cite{GS}). The result for symmetric matrices then follows  by an
argument by Duistermaat (\cite{Duistermaat}). Similarly, the surjectivity of the maps $F, F^r$ and $G$  also follow from convexity arguments, once the appropriate symplectic setting is identified. Here, we take what
Thompson (\cite{Thompson})  calls a low road in linear algebra, but gain some
information which does not follow directly from rote application of these more
general results.

\bigskip

The proof of Theorem \ref{teorema} relies on a combination of well known facts of real analysis, condensed in Lemma \ref{lemma:topo}. The verification of the hypotheses of the lemma is somewhat different for $F$ and $G$. In both cases, the theorem is proved by induction on the dimension. In the inductive step, we
see how faces of the domain are `creased' by either $F$ or $G$ so as to obtain
the faces of the image parallelotope.
Theorem \ref{teoremadois} is a simple consequence of Theorem \ref{teorema}.

\bigskip
The authors are supported by CNPq, CAPES, and FAPERJ.
They are also grateful to an anonymous reader of a previous version of this text, who indicated errors and suggested a number of improvements.

\section{A real analysis lemma} \label{provateorema}

The  outline of the proof of Theorem \ref{teorema} is the same for the functions $F: \cD_F \to \Pp_F$ and $G: \cD_G \to \Pp_g$.
In a nutshell, we must check the hypotheses of the lemma below, which combines familiar arguments from real analysis. We state it so as it applies directly to $F$. Let $\cD$ be $\Oo_I$, the closed positive orthant of $\RR^n$, and $\Pp$ be $\Pp_F$.
Denote by $\interior X$ the interior of a set $X$.

\begin{lemma} \label{lemma:topo}  Let $\widetilde H: \cD \to \RR^n$ be a  function satisfying the following properties.
\begin{enumerate}
\item[(H1)] $\widetilde H$ is a continuous, proper map, i.e., $\lim_{\| v \| \to \infty} \|\widetilde H(v) \| = \infty$.
\item[(H2)] The restriction of $\widetilde H$ to $\interior \cD$ is a $C^1$ map with invertible Jacobians.
\item[(H3)] Some point of $\interior \cD$ is taken by $\widetilde H$ to $\interior \Pp$.
\item[(H4)] The restriction $\hat H: \partial \cD \to \partial \Pp$ is a homeomorphism. 
\item[(H5)] No point of $\interior \cD$ is taken by $\widetilde H$ to $\partial \Pp$.

\end{enumerate}
Then the image of  $\widetilde H$ is $\Pp$ and the function $\widetilde H: \cD \to \Pp$  obtained from $\widetilde H$ by restricting its counterdomain is a homeomorphism which restricts to a diffeomorphism between $\interior \cD$ and $\interior \Pp$.
\end{lemma}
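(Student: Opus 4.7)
The plan is to exploit the observation that, under hypotheses (H4) and (H5), the map $\widetilde H$ respects the decompositions $\cD=\interior\cD\cup\partial\cD$ and $\Pp=\interior\Pp\cup\partial\Pp$: no interior point has a boundary image, and $\partial\cD$ maps homeomorphically onto $\partial\Pp$. Hence $\widetilde H^{-1}(\interior\Pp)=\interior\cD$, and the interior and boundary restrictions can be handled separately and then glued.

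First, I would show that $\widetilde H(\interior\cD)=\interior\Pp$. By (H2) the restriction to $\interior\cD$ is an open map, so $\widetilde H(\interior\cD)$ is open in $\interior\Pp$. To see it is also closed there, take a sequence $y_k=\widetilde H(x_k)\to y\in\interior\Pp$: properness (H1) forces $\{x_k\}$ to be bounded, hence to have a subsequential limit $x\in\cD$ with $\widetilde H(x)=y$, and since $y\in\interior\Pp$, hypothesis (H4) places $x\in\interior\cD$. Connectedness of $\interior\Pp$ together with (H3), which guarantees a nonempty intersection of the image with $\interior\Pp$, then yields surjectivity onto $\interior\Pp$.

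Next, I would upgrade this surjection to a diffeomorphism on the interiors. The restriction $\widetilde H:\interior\cD\to\interior\Pp$ is a local diffeomorphism by (H2); it is also proper, because a compact $K\subset\interior\Pp$ has preimage closed and bounded in $\cD$ by (H1), and by (H4) this preimage cannot accumulate on $\partial\cD$, so it is compact in $\interior\cD$. A proper local diffeomorphism between connected manifolds is a covering map, and since $\interior\Pp$ is an open box hence simply connected while $\interior\cD=(0,\infty)^n$ is connected, the cover must be a single sheet, so $\widetilde H$ restricts to a diffeomorphism between interiors.

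Finally, combining the boundary homeomorphism (H4) with the interior diffeomorphism produces a continuous bijection $\widetilde H:\cD\to\Pp$; because $\widetilde H$ is proper and $\Pp$ is locally compact Hausdorff, it is a closed map, so its set-theoretic inverse is continuous and $\widetilde H$ is a homeomorphism. The main obstacle is the covering-map step: carefully justifying properness of the interior restriction, by ruling out sequences in $\interior\cD$ that escape to $\partial\cD$ while their images remain in a compact subset of $\interior\Pp$, is where (H1), (H4) and (H5) must be coordinated, and the appeal to simple connectedness of $\interior\Pp$ is what eliminates any multiplicity in the preimages.
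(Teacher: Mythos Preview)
Your plan is sound and the argument can be completed, but one early step is stated too quickly. The claim in your opening paragraph that $\widetilde H^{-1}(\interior\Pp)=\interior\cD$ does not follow from (H4) and (H5) alone: (H5) only forbids interior points from landing on $\partial\Pp$, not from landing outside $\Pp$ altogether. You need (H3) and the connectedness of $\interior\cD$ here as well: $\widetilde H(\interior\cD)$ is connected, misses $\partial\Pp$ by (H5), and meets $\interior\Pp$ by (H3), hence lies in the connected component $\interior\Pp$ of $\RR^n\setminus\partial\Pp$. Once that containment is in hand, your open--closed--nonempty argument and the proper-local-diffeomorphism-hence-covering step go through cleanly.

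Your route is genuinely different from the paper's. The paper argues via degree theory: it first shows directly (properness plus the inverse function theorem) that the preimage count is locally constant on $\RR^n\setminus\partial\Pp$, uses (H3) and (H5) to pin down the image as $\Pp$, then invokes (H4) to get $\deg(\widetilde H,\mu)=\pm1$ for $\mu\in\interior\Pp$, and finally observes that by (H2) and connectedness of $\interior\cD$ all Jacobian determinants share a sign, so the preimage count equals $|\deg|=1$. Your argument trades degree theory for covering-space theory, exploiting the simple connectedness of the open box $\interior\Pp$ to kill multiplicity; this is arguably more elementary and self-contained. The paper's approach, on the other hand, would survive even if $\interior\Pp$ were not simply connected, and its constant-sign-of-Jacobian observation is a reusable principle.
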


In order to apply the lemma for the function $G$, consider $\widetilde H: \cD \to \RR^{n+1}$, where $\cD = \Oo_I \times \RR$ and set $\Pp = \Pp_G$. The proof follows verbatim.

\medskip

\begin{proof} We first show that points in the connected components of $\RR^n \setminus \widetilde H(\partial \cD ) = \RR^n \setminus \partial \Pp$ have the same number of preimages. Take $\mu \in \RR^n \setminus \partial \Pp$. By connectivity,
it suffices to show that, for a small open neighborhood $U$ of $\mu$, points
in $U$ have the same number of preimages. If $\mu$ has infinite
preimages, by properness (hypothesis (H1)) they have to accumulate at some preimage $v_\ast$.
Preimages in $\interior \cD $ are isolated, by the inverse function
theorem (use hypothesis (H2)), thus $v_\ast \in \partial \cD $, contradicting $\widetilde H(v_\ast) = \mu \in \RR^n \setminus \partial \Pp$.

Thus $\mu$ has a finite number of isolated preimages, say $v_1, \ldots, v_k$. From the inverse function theorem, for every sufficiently small open ball $B$ centered around $\mu$,
	there are open disjoint sets $V_i, i=1, \ldots, k$, each containing $v_i$,
	for which $\widetilde H$ takes $ V_i$ to $B$ diffeomorphically. Thus, points in $B$
	have at least $k$ preimages.

If, for  balls $B_n$ of radius $1/n$ there are
	points $\tilde \mu_n$ with (at least) $k+1$ preimages, one preimage $w_n
	\in \cD $ is outside  $\cup_{i=1}^k V_i$. By properness, they
	accumulate at $w_\ast \notin \cup_{i=1}^k V_i$. But then $\widetilde H(w_\ast)
	= \mu$, contradicting the fact that $\mu$ has exactly $k$ preimages, all
	in $\cup_{i=1}^k V_i$.

By (H3), there is $v \in \interior \cD$ such that $\widetilde H(v) \in \interior \Pp$. From the argument above and $(H4)$, the set $\Pp$ lies in the image of $\widetilde H$.  If, for some $w \in \interior \cD$ we have $\widetilde H(w) \notin \Pp$, then the segment joining $v$ in $w$, which lies in $\interior \cD$, must contain a point whose image lies in $\partial \Pp$, contradicting (H5). Thus, the image of $\widetilde H$ is  $\Pp$ and the associated function $H : \cD \to \Pp$ is well defined.

From (H1), $\widetilde H: \cD \to \RR^n$ is  proper and has a well defined topological degree $\deg(\widetilde H,\mu)$ (an excellent reference for degree theory is \cite{OutereloRuiz}) for any regular value $\mu \in \interior \RR^n \setminus \partial \Pp$, i.e., a point whose preimages are regular points  (regular values are dense, by Sard's theorem). From (H4), for $\mu \in \interior \Pp$, $\deg (\widetilde H, \mu) = \pm 1$. Moreover,
\[ \deg(\widetilde H, \mu) \ = \ \sum_{v \in \widetilde H^{-1}(\mu)} \sgn \det D\widetilde H(v)  \]
and all preimages are counted with the same sign, by (H5). Indeed, the  determinant of the Jacobian $DH(v)$ is never zero for $v \in \interior \cD$ and is continuous, by (H1). Thus every point of $\Pp$ has a unique preimage.
\qed
\end{proof}

\medskip
We are left with proving the hypotheses of the lemma for the counterparts $\widetilde F: \cD_F \to \RR^n$ and $\widetilde G: \cD_G \to \RR^{n+1}$ of the functions $F$ and $G$.

\section{Proof of Theorem \ref{teorema} for $F$ and $F^r$}

\medskip
Without loss,  suppose $S = D$, a diagonal matrix
with eigenvalues
\[ D_{11} = \lambda_1 < \ldots < D_{nn}=\lambda_n \ . \]
We then take $\Qq = [e_1, \ldots, e_n]$ to consist of the canonical vectors, so that $Q = I$ and $\Oo_I \subset \RR^n$ is the usual positive orthant. Consider
\begin{align*} \tilde F : \cD &= \cD_F = \Oo_I \to \ \RR^n  \hspace{0.3cm} ,\\
	v \  &\mapsto \sigma_o (D + v \otimes v)
\end{align*}
where now all numbers in sight are real. Complex numbers will return only in the proof of Theorem \ref{teoremadois} in Section \ref{provateoremadois}.

%Among other facts, we must show that the image of $\tilde F$ is $\Pp = \Pp_F$.
%%\[ \Pp = \Pp_F = [\lambda_1,\lambda_2] \times [\lambda_2,\lambda_3]
%%\times \ldots \times [\lambda_n, \infty) \ . \]
%Most of the argument consists in proving that $\tilde F$ restricts to a
%homeomorphism between the boundaries $\partial \cD $ and $\partial \Pp $.
The set $\partial \cD $ consists of $n$ faces of  $\cD = \Oo_I$,
\[ E_i \ = \ \{ v \in \RR^n \ , \ v_i =0\}, \quad i = 1, \ldots,n \ .\]

The parallelotope $\Pp  \subset \RR^{n}$ has $2 n  - 1$ faces, which we now describe.
Set
\[ L_{i} \ = \ [\lambda_1, \lambda_2] \times \ldots \times [\lambda_{i-1},
\lambda_{i}] \ , \quad R_i \ = \  [\lambda_{i}, \lambda_{i+1}] \times \ldots \times
[\lambda_n, \infty)\ , \]
where sets using indices not in $\{1, \ldots, n\}$ are omitted.
For $i>1$, as we shall see, $\tilde F$ (and $F$) {\it creases}  each face $E_i$, sending it  to two adjoining faces of $\partial \Pp $,
\[  F(E_i) \ = \  \big(L_{i} \times
\{ \lambda_i \} \times R_{i+1} \big) \ \cup \ \big( L_{i-1} \times \{ \lambda_{i} \} \times R_{i} \big)  \ . \]
Face $E_1$ is sent to a single face of $\partial \Pp$, $\{\lambda_1\} \times R_{2}$. The reader is invited to check that the formulas indeed describe five of the six faces of a parallelotope in $\RR^3$.

\medskip
Recall that a simple eigenvalue  of a symmetric matrix varies smoothly with the matrix  \cite{Lax}:  in this case, if
$T w_i \ = \ \lambda_i w_i$ for a normalized $w_i \in \RR^n$,
$ \dot \lambda_i  \ = \ \langle \dot T w_i , w_i \rangle $.

\medskip
We define three subsets of $\cD$.

\begin{enumerate}
\item[.] $\cD_d $ is the  set of points in which $\tilde F(v)$ has a double eigenvalue.
\item[.]  The {\it critical set} $\ \cC \subset \interior (\cD \setminus \cD_d)$ consists of points in  which the Jacobian $D \tilde F$ is not invertible.
\item[.] The set of {\it regular} points is the complement $\cD \setminus (\partial \cD \cup \cD_d \cup \cC)$.
\end{enumerate}

\begin{prop} \label{prop:fronteira}
	\begin{itemize}
		
		\item[(i)] $\cD_d \subset \partial \cD $. Thus, $\tilde F$ is differentiable in
		$\interior \cD $.
		\item[(ii)] $\cC \ = \ \emptyset$.
\item[(iii)]  $\partial \tilde F(\cD ) \subset \tilde F(\partial \cD )$.
		\item[(iv)] The matrices $D$ and $D + v \otimes v$ share an eigenvalue
		$\lambda_i$ if and only if $v \in E_i$. In particular, $\widetilde F^{-1} (\partial \Pp) \subset \partial \cD$.
	\end{itemize}
	\end{prop}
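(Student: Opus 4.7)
The plan is to prove the four items in the order (iv), (i), (ii), (iii), with each feeding into the next.

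For (iv), I would argue directly from the eigenvalue equation. If $v_i=0$ then $(D+v\otimes v)e_i = \lambda_i e_i + v\langle e_i,v\rangle = \lambda_i e_i$, so $\lambda_i$ is an eigenvalue of $T=D+v\otimes v$. Conversely, suppose $Tw=\lambda_i w$ for some $w$; rewriting gives $(D-\lambda_i)w = -\langle w,v\rangle\, v$, and taking the $i$-th coordinate yields $0 = -\langle w,v\rangle\, v_i$. If $\langle w,v\rangle \ne 0$, then $v_i=0$ at once. If $\langle w,v\rangle = 0$, then $w$ is an eigenvector of $D$ for $\lambda_i$; by simplicity of the spectrum of $D$, $w$ is proportional to $e_i$, so $0=\langle w,v\rangle$ again forces $v_i=0$. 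The ``in particular'' clause follows because any point of $\partial\Pp$ has some coordinate equal to some $\lambda_k$, so $T$ and $D$ share an eigenvalue and the above places $v$ in $\partial\cD$.

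For (i), I invoke Theorem \ref{standard}: the eigenvalues $\mu_j$ of $T$ interlace the $\lambda_j$, so a coincidence $\mu_i=\mu_{i+1}$ is squeezed by $\mu_i\le \lambda_{i+1}\le \mu_{i+1}$, forcing $\mu_i=\lambda_{i+1}$. Hence $T$ and $D$ share the eigenvalue $\lambda_{i+1}$, and (iv) puts $v\in E_{i+1}\subset\partial\cD$. This gives $\cD_d\subset\partial\cD$, and consequently $\widetilde F$ is differentiable on $\interior\cD$ by standard smooth-dependence-on-parameters for simple eigenvalues. For (ii), I apply first-order perturbation theory at such a regular point: writing $w_j$ for the unit eigenvector of $T$ for $\mu_j$ and using $\partial T/\partial v_k = e_k v^{\transpose}+v e_k^{\transpose}$, a short computation yields $\partial \mu_j/\partial v_k = 2\,w_{j,k}\langle v,w_j\rangle$. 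Hence $D\widetilde F = 2\,\diag(\alpha)\,W$, where $\alpha_j=\langle v,w_j\rangle$ and $W$ is the orthogonal matrix whose rows are the $w_j$. Since $\det W=\pm 1$, singularity of $D\widetilde F$ forces $\alpha_j=0$ for some $j$; but $v\perp w_j$ makes $Tw_j = Dw_j$, so $w_j$ is also an eigenvector of $D$, whence $w_j=\pm e_i$ for some $i$, and $v_i=0$. This contradicts $v\in\interior\cD$, so $\cC=\emptyset$.

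For (iii), I first note that $\widetilde F$ is proper (large $\|v\|$ forces an eigenvalue of $D+v\otimes v$ to blow up), so its image is closed and each boundary point of the image is actually attained. By (i) and (ii), $\widetilde F$ is smooth with invertible Jacobian on $\interior\cD$, hence locally open there by the inverse function theorem. If $\mu\in\partial\widetilde F(\cD)$ had a preimage $v\in\interior\cD$, an open neighborhood of $v$ would be mapped onto an open neighborhood of $\mu$ in $\RR^n$ lying inside $\widetilde F(\cD)$, contradicting $\mu$ being a boundary point. Thus every preimage of every boundary point of $\widetilde F(\cD)$ lies in $\partial\cD$. The main delicacy in the whole argument is the eigenvector analysis behind (ii): the derivative calculation is routine, but the qualitative fact — that $D\widetilde F$ degenerates precisely when some eigenvector of $T$ coincides with one of $D$, and that this event lives entirely on the boundary faces $E_i$ — is what pins the critical behavior to $\partial\cD$ and foreshadows the creasing picture described before the proposition.
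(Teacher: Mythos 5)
Your arguments for (ii), (iii) and (iv) are essentially the paper's own. For (iv) you reduce to $0=-\langle w,v\rangle v_i$ by looking at the $i$-th coordinate and split cases, exactly as in the paper. For (ii) your factorization $D\widetilde F = 2\diag(\langle v, w_j\rangle)W$ with $W$ orthogonal is the same decomposition the paper uses (they write it as a diagonal matrix times the map $\dot v \mapsto (\langle w_1,\dot v\rangle,\dots,\langle w_n,\dot v\rangle)$), and the conclusion that $\langle v, w_j\rangle = 0$ forces $w_j = \pm e_i$ and $v_i = 0$ is identical. For (iii) you spell out properness plus local openness, which is a slightly more careful version of the paper's one-line inverse-function-theorem remark.

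Where you genuinely diverge is (i). The paper does \emph{not} invoke the interlacing inequality here; it argues directly: a double eigenvalue $\rho$ of $T$ has a $\ge 2$-dimensional eigenspace, hence contains a nonzero $w$ with $w_1=0$, and equating first coordinates in $(D-\rho)w = -\langle v,w\rangle v$ gives $\langle v,w\rangle v_1 = 0$, leading to $v\in E_1$ or, when $\langle v,w\rangle = 0$, to $(D-\rho)w=0$, $w$ canonical, and $v_j=0$. You instead cite Theorem~\ref{standard} (Weyl interlacing) to squeeze a repeated $\mu_i$ onto $\lambda_{i+1}$, then fall back on (iv). Your argument is mathematically sound, but note the logical architecture of the paper: Theorem~\ref{standard} is presented as a \emph{consequence} of Theorem~\ref{teorema}, and Proposition~\ref{prop:fronteira} is an ingredient in the proof of Theorem~\ref{teorema}. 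Using Theorem~\ref{standard} inside the proof therefore introduces an apparent circularity (unless one explicitly appeals to an independent proof of the forward interlacing inequality, e.g.\ Courant--Fischer). This is presumably why the paper opts for the self-contained eigenvector computation in (i). You should either replace your (i) with a direct argument of that sort, or explicitly state that you are taking the easy (forward) half of Weyl's interlacing as a known, independently established fact.
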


\medskip

\medskip

\begin{proof} 	We prove (i). A double eigenvalue $\lambda_i$ of $D + v \otimes v$ admits a
	(nonzero) eigenvector $w$ in the subspace of eigenvectors associated with $\lambda_i$ for which $w_1 =0$.  In the expression
	$(D-\lambda_i)w \ = \ -(v \otimes v) w \ = \ -\langle v, w \rangle v$, equate
	first coordinates:
	either $v_1=0$ or $\langle v, w \rangle =0$. In the first case, $v \in E_1
	\subset \partial \cD $ and we are done. Otherwise, $(D - \lambda_i) w = 0$
	and $w$ is a canonical vector, $w= e_j$. As $\langle v, w \rangle
	=0$, we
	must have $v_j=0$ and then $v \in E_j \subset \partial \cD$.
	
	\medskip
	
	To prove (ii), let $T \ = \ D + v \otimes v$, $T w_i = \lambda_i w_i, \ i=1,
	\ldots, n, \ \| w_i \| \ = \ 1$.
	The Jacobian of $\tilde F$ at a point $v$ is
	\[ J(v) \dot v \ = \ \big( \langle \dot T w_1,  w_1 \rangle, \ldots, \langle
	\dot T w_{n},  w_{n} \rangle \big) , \]
	where  $\dot v \in \RR^n$ and
	$ \dot T \ = \ \dot v \otimes v + v \otimes \dot v $.
	Let $\dot V$ be the vector space of such matrices.
	Write the linear transformation $J(v)$ as a composition,
	\[ J(v) \dot v \ = \ 2 \big(\langle w_1, v \rangle \langle w_1, \dot v
	\rangle, \ldots,\langle w_n, v \rangle \langle w_n, \dot v \rangle  \big) \]
	\[\ = \ 2 \diag(\langle w_1, v \rangle, \ldots, \langle w_n, v \rangle)\
	(\langle w_1, \dot v \rangle, \ldots, \langle w_n, \dot v \rangle  )^T \ .\]
	
	A point $v$ in the interior of $\cD $ is critical if and only if $J(v)$ not
	invertible. Clearly $ \dot v \mapsto (\langle w_1, \dot v \rangle, \ldots,
	\langle w_n, \dot v \rangle  )$
	is invertible, as the vectors $\{ w_i\}$ are linearly independent. Suppose
	by contradiction that, for some $i$, we have $\langle w_i, v \rangle \ = \ 0$.
	Equation $(D +  v \otimes v) w_i = \lambda_i w_i$ becomes $(D - \lambda_i)
	w_i \ = \ 0$, so that
	$w_i \ = \  t e_i, t \ne 0$. Now, $\langle v, w_i \rangle=0$ implies $v_i =
	0$, and again $v \in E_i \subset \partial \cD $.
	
	\medskip

By the inverse function theorem, $\tilde F$ is a local diffeomorphism at regular points: this settles (iii).
	
	To prove (iv), take a common eigenvalue $\lambda_i$ and eigenvectors $e_i, y
	\ne 0$, so that $D e_i = \lambda_i e_i$ and $D y + \langle v , y \rangle v =
	\lambda_i y$ and
	$ (D - \lambda_i) (y-e_i) \ = \ - \langle v , y \rangle v $. The $i$-th entry of
	both
	sides of the last equation is zero. If $v_i \ = \ 0$, we are done. Suppose
	$\langle v , y \rangle =0$: $y=  t e_i \in E_i, t \ne 0$, and then
	$v_i = 0$. The converse is trivial.
	\qed
\end{proof}

\medskip

%We still cannot say that $\tilde F(\partial
%\cD ) \subset \partial \tilde F(\cD )$.
For $v \in E_i$, $\lambda_i$ is in the spectrum of both $D$ and $T = D + v \otimes v$, but we do not know yet where $\lambda_i$ sits among the ordered eigenvalues of $T$. As we shall see, the study of $\partial
\tilde F( E_i)$ requires the understanding of the map $\tilde F_{\hat D}$ for
the $(n-1) \times (n-1)$ matrix $\hat D$, obtained from $D$ by removing the
eigenvalue $\lambda_i$. Said differently, the proof that  $\tilde F$ takes
$\partial \cD $ to $\partial \Pp $ homeomorphically is by induction.

\medskip
Statements (i) and (ii) above imply hypothesis (H2) of Lemma \ref{lemma:topo}.

We verify hypotheses (H1) and (H3).

\begin{prop} \label{prop:properF2}
	Hypothesis (H1) of Lemma \ref{lemma:topo} holds: the map $\tilde F$ is proper.
\end{prop}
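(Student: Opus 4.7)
The plan is to observe that $\|\tilde F(v)\|$ is the Euclidean norm of the spectrum of $T = D + v \otimes v$, which coincides with the Frobenius (Hilbert--Schmidt) norm of $T$ itself, regardless of the ordering chosen on the eigenvalues. This is the crucial reduction: it converts a statement about eigenvalues (hard to manipulate directly as a function of $v$) into a statement about an explicit polynomial in the entries of $v$.

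Concretely, I would compute
\[
\|\tilde F(v)\|^2 \ = \ \sum_{i=1}^n \mu_i(v)^2 \ = \ \tr\bigl((D + v \otimes v)^2\bigr) \ = \ \tr(D^2) \ + \ 2\, v^\transpose D v \ + \ \|v\|^4,
\]
using $(v \otimes v)^2 = \|v\|^2 \, v \otimes v$ for the cross term and final term. Then I would bound the middle summand below by the Rayleigh inequality $v^\transpose D v \ge \lambda_1 \|v\|^2$, yielding
\[
\|\tilde F(v)\|^2 \ \ge \ \|v\|^4 \ + \ 2 \lambda_1 \|v\|^2 \ + \ \tr(D^2).
\]
The right-hand side is a polynomial of degree two in $\|v\|^2$ with positive leading coefficient, hence tends to $+\infty$ as $\|v\| \to \infty$. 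Taking square roots gives $\|\tilde F(v)\| \to \infty$, which is exactly (H1).

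I do not anticipate any genuine obstacle: the argument is a one-line identity followed by a Rayleigh bound. The only subtlety worth noting is the permutation invariance of $\sum_i \mu_i^2$, which is what legitimately allows us to replace the ordered eigenvalue vector by an expression depending only on the matrix $T$, not on a diagonalization.
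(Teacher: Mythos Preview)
Your argument is correct and follows the same route as the paper: both identify $\|\tilde F(v)\|^2 = \sum_i \mu_i^2$ with the Frobenius norm squared of $D + v\otimes v$. The paper stops at that identity (leaving the divergence implicit, e.g.\ via the reverse triangle inequality $\|D + v\otimes v\|_F \ge \|v\|^2 - \|D\|_F$), whereas you carry out the expansion and apply a Rayleigh bound; this is a harmless elaboration of the same idea.
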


\begin{proof} With the Frobenius norm, for a matrix $T = D + v \otimes v$ with
eigenvalues $\{\mu_i\}$, we have $\|D + v \otimes v \|^2 \ = \ \sum_i \mu_i^2$.
	\qed
\end{proof}

\begin{prop} \label{prop:H3} Hypothesis (H3) also holds:
for some $v \in \interior \cD$, $\widetilde F(v) \in \interior \Pp$.
	
\end{prop}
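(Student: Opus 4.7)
The plan is to exhibit an explicit $v \in \interior \cD$ and verify strict interlacing directly from the secular equation, without appealing to the classical Theorem \ref{standard} (which Theorem \ref{teorema} is meant to strengthen). Choose any $v \in \RR^n$ with every entry positive, for instance $v = (1, 1, \ldots, 1)$, so $v \in \interior \cD$. Set $T = D + v \otimes v$. By Proposition \ref{prop:fronteira}(iv), $T$ and $D$ share no eigenvalue, so every eigenvalue $\mu$ of $T$ satisfies $\mu \ne \lambda_i$ for all $i$. The matrix-determinant lemma (applied to the rank-one update $D - \mu I + v \otimes v$) then gives
\[
0 \;=\; \det(T - \mu I) \;=\; \prod_{i=1}^n(\lambda_i - \mu) \cdot \left(1 + \sum_{i=1}^n \frac{v_i^2}{\lambda_i - \mu}\right),
\]
so the eigenvalues of $T$ are precisely the roots of the secular function $g(\mu) := \sum_{i=1}^n v_i^2/(\mu - \lambda_i) - 1$ on $\RR \setminus \{\lambda_1, \ldots, \lambda_n\}$.

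Next, I would analyze the zeros of $g$. Since every $v_i^2 > 0$, the derivative $g'(\mu) = -\sum_i v_i^2/(\mu - \lambda_i)^2$ is strictly negative where defined, so $g$ is strictly decreasing on each of its intervals of definition. On $(\lambda_i, \lambda_{i+1})$ for $i = 1, \ldots, n-1$, $g$ runs continuously from $+\infty$ to $-\infty$ and so has exactly one root; on $(\lambda_n, \infty)$, $g$ runs from $+\infty$ to $-1$ and has exactly one root; and on $(-\infty, \lambda_1)$, $g$ runs from $-1$ to $-\infty$ and has no root. These $n$ roots account for the full spectrum of $T$, and reading them in order yields
\[
\lambda_i < \mu_i < \lambda_{i+1} \quad (i = 1, \ldots, n-1), \qquad \mu_n > \lambda_n,
\]
which is exactly the condition $\sigma_o(T) \in \interior \Pp_F$.

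Main obstacle: essentially none—the argument is a short direct calculation once the secular equation is in place. The only delicate point is the decision to work directly with $g$ rather than to combine Theorem \ref{standard} with Proposition \ref{prop:fronteira}(iv), since the latter route would make the proof of Theorem \ref{teorema} circular.
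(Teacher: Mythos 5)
Your proof is correct, but it takes a genuinely different route from the paper's. The paper picks the same vector $\bum = (1,\ldots,1)$ but scales it by $\sqrt{t}$ and argues \emph{locally}: since each eigenvalue satisfies $\dot\lambda_i(0) = \langle (\bum\otimes\bum)e_i, e_i\rangle = 1 > 0$ at $t = 0$, all $n$ eigenvalues of $D + t\,\bum\otimes\bum$ move strictly to the right at unit speed, and because $(\lambda_1,\ldots,\lambda_n)$ is a vertex of $\Pp_F$ whose edges all point in the positive coordinate directions, the spectrum vector enters $\interior\Pp_F$ for small $t>0$. You instead work \emph{globally} for the fixed vector $v = \bum$ via the secular function $g(\mu) = \sum_i v_i^2/(\mu-\lambda_i) - 1$, establishing strict interlacing by a sign-change count on each interval between consecutive $\lambda_i$. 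Your route is essentially the classical proof of the forward half of Weyl's interlacing theorem (Theorem \ref{standard}), restricted to a single well-chosen $v$; as you correctly flag, deriving interlacing from the secular equation rather than citing Theorem \ref{standard} is what keeps the argument non-circular. The paper's perturbative argument is shorter and avoids the determinant identity entirely, relying only on the smooth dependence of simple eigenvalues already used elsewhere in the proof of Proposition \ref{prop:fronteira}; your secular-equation argument is self-contained and yields the stronger statement that \emph{every} $v$ with all coordinates nonzero is sent to $\interior\Pp_F$, which the paper only obtains later as a consequence of (H4)--(H5) and Lemma \ref{lemma:topo}.

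One small economy worth noting: the appeal to Proposition \ref{prop:fronteira}(iv) to rule out $\mu = \lambda_i$ is unnecessary, since the polynomial identity
\[
\det(T - \mu I) \;=\; \prod_{i}(\lambda_i - \mu) + \sum_j v_j^2 \prod_{i\ne j}(\lambda_i - \mu)
\]
shows directly that $\lambda_j$ is a root if and only if $v_j = 0$ (the $\lambda_i$ being distinct), so for $v$ with all entries nonzero the eigenvalues of $T$ are exactly the roots of $g$.
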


\begin{proof} Let $\bum = (1, 1, \ldots, 1)$, $t>0$. For eigenvalues $\lambda_i$ of $D + t v \otimes v$, we have $\dot \lambda_i (t=0) = \langle \bum \otimes \bum \ e_i, e_i \rangle = 1 >0$: $\sigma_o(D + t \ \bum \otimes \bum)$ enters $\Pp$ for $t>0$ small.
\qed
\end{proof}

\medskip
Hypotheses (H4) and (H5) require an inductive argument, presented below.

\bs
\noi
{\bf Proof of Theorem \ref{teorema} for $F$ and $F^r$:} We first prove by induction the claim about $F$, and then we handle $F^r$. The case $n \ = \ 2$ contains the gist of the proof.
For $v \in E_1$, $v = (0, c)$, so that
\[ T \ = \ D + v \otimes v \ = \ \begin{pmatrix} \lambda_1 & 0 \\ 0 & \lambda_2 + c^2
\end{pmatrix} \ . \]
As $\lambda_1 < \lambda_2$, we also have $\lambda_1 < \lambda_2 + c^2$, so that
\[ \tilde F(T) = (\lambda_1 , \lambda_2 + c^2) \in \{ \lambda_1\} \times
[\lambda_2, \infty) \in \partial \Pp \ . \]
If instead $v \in E_2$, $v = (c, 0)$ and
\[ T \ = \ D + v \otimes v \ = \ \begin{pmatrix} \lambda_1 + c^2 & 0 \\ 0 & \lambda_2
\end{pmatrix} \ . \]
There are two possibilities. If $\lambda_1 + c^2 \le \lambda_2$, in accordance with Figure \ref{figura},
\[ \tilde F(T) \ = \ (\lambda_1 + c^2 , \lambda_2) \in [\lambda_1, \lambda_2]
\times \{ \lambda_2\} \in \partial \Pp \ .\]
Otherwise
\[ \tilde F(T) \ = \ (\lambda_2, \lambda_1 + c^2 ) \in \{\lambda_2\} \times [
\lambda_2, \infty) \in \partial \Pp  \ .\]
As $\tilde F: \cD  \to \RR^n$ is proper,  the restriction  $F:\partial \cD  \to
\partial \Pp $ is a homeomorphism: $F$ satisfies hypothesis (H4) of Lemma \ref{lemma:topo}. From Lemma \ref{prop:fronteira}(iv), (H5) also holds. The first  step of the induction argument is complete.

\medskip

We assume the claim for $\widetilde F$ acting on $(n-1) \times (n-1)$ matrices.
For a diagonal $n \times n$ matrix $D$, we consider $\tilde F(\partial \cD )$.
For  $v \in E_i$ in a face of the orthant  $\cD$, the
$i$-th column and row of the  matrix $D + v \otimes v$ equal $\lambda_i e_i^T$
and $\lambda_i e_i$, so that $\lambda_i$ is a common eigenvalue of $D$ and $D + v \otimes v$. The remaining
eigenvalues of $D + v \otimes v$  belong to the spectrum of $\hat D + \hat v \otimes \hat v$, where
$\hat D$ is obtained by removing the $i$-th row and column of $D$ and $\hat v$ is obtained from removing the $i$-th entry of $v$.

In order to apply the inductive
hypothesis, at the risk of being pedantic, identify $E_i$ with the positive
orthant $\hat \cD  \subset \RR^{n-1}$, so that $\tilde F: E_i \to \RR^{n-1}$ is
 identified with  $\tilde F_{\hat D}: \hat \cD  \to \RR^{n-1}$ which, by induction, induces a homeomorphism  $F_{\hat D}: \hat \cD  \to \hat \Pp $, where, for $i > 1$,
\[ \hat \Pp  \ = \  [\lambda_1,\lambda_2] \times \ldots \times
[\lambda_{i-2},\lambda_{i-1}] \times [\lambda_{i-1},\lambda_{i+1}]\times
[\lambda_{i+1},\lambda_{i+2}] \times \ldots \times [\lambda_n, \infty). \]
Notice that the two intervals containing $\lambda_i$ in the definition of $\Pp $
were replaced by a single interval $[\lambda_{i-1},\lambda_{i+1}]$. Split $[\lambda_{i-1},\lambda_{i+1}] = [\lambda_{i-1},\lambda_{i}] \cup [\lambda_{i},\lambda_{i+1}]$, and then
\[ \hat \Pp  \ = \  \big( [\lambda_1,\lambda_2] \times \ldots \times
[\lambda_{i-2},\lambda_{i-1}] \times [\lambda_{i-1},\lambda_{i}]\times
[\lambda_{i+1},\lambda_{i+2}] \times \ldots \times [\lambda_n, \infty) \big)\]
\[\cup \ \big([\lambda_1,\lambda_2] \times \ldots \times
[\lambda_{i-2},\lambda_{i-1}] \times [\lambda_{i},\lambda_{i+1}]\times
[\lambda_{i+1},\lambda_{i+2}] \times \ldots \times [\lambda_n, \infty) \big) . \]
In order to compute $\tilde F(E_i)$, we insert $\lambda_i$
among the ordered eigenvalues in $\hat \Pp$.
\[ F(E_i) \ = \ \big( [\lambda_1,\lambda_2] \times \ldots \times
[\lambda_{i-2},\lambda_{i-1}] \times [\lambda_{i-1},\lambda_{i}]\times \{ \lambda_i\} \times
[\lambda_{i+1},\lambda_{i+2}] \times \ldots \times [\lambda_n, \infty) \big)\]
\[\cup \ \big([\lambda_1,\lambda_2] \times \ldots \times
[\lambda_{i-2},\lambda_{i-1}] \times \{ \lambda_i\} \times [\lambda_{i},\lambda_{i+1}]\times
[\lambda_{i+1},\lambda_{i+2}] \times \ldots \times [\lambda_n, \infty) \big) \]
\[ \ = \  \big(L_{i} \times
\{ \lambda_i \} \times R_{i+1} \big) \ \cup \ \big( L_{i-1} \times \{ \lambda_{i} \} \times R_{i} \big)  \]
in the notation introduced in the beginning of the section. Thus $F$ indeed creases  faces $E_i$, $i>1$, giving rise to two faces of $\partial \Pp$. Moreover, $F$ is a homeomorphism between
the remaining faces $E_1 \subset \partial \cD$ and $\big(\{ \lambda_1 \} \times R_{2} \big) \subset \partial \Pp$: the details are left to the reader (simply omit intervals containing the index $i-1$).

Thus, $F:\partial \cD  \to \partial \Pp $ is surjective, and injective  on the
restriction to each face $E_i \times \RR$. We are left with showing injectivity
on the union of the faces. Let $v_i \in E_i$ and $v \in \cD $ such that $\tilde
F(v_i)= \tilde F(v)$. As $\lambda_i$ is an eigenvalue of $\tilde F(v_i)$, by
Proposition \ref{prop:fronteira}(iv) we must have $v \in E_i$. As the
restriction of $\tilde F$ to $E_i$ is injective, global injectivity in
$\partial \cD $ follows. A simple argument then shows that $F: \partial \cD  \to \partial \Pp $ is a
homeomorphism, so that hypothesis (H4) of Lemma \ref{lemma:topo} holds. From Lemma \ref{prop:fronteira} (iv), (H5) also holds.  Item (1) now follows from Lemma \ref{lemma:topo}.

\medskip
We now consider $F^r$. Again, $S = D$. Since
\[ \tr (S + v \otimes v) \ = \ \tr D + \langle v, v \rangle ^2\]
and $F: \cD  \to \Pp $ is a homeomorphism, we have that $F^r$ is also a
homeomorphism. When restricting to the interior of $\cD$, $F^r$ takes one
hypersurface to another and the Jacobian at each point is easily seen to be
invertible, showing that  $F^r$ is indeed a diffeomorphism between interiors.
\qed

\section{Proof of Theorem \ref{teorema}  for $G$, $G^c$ and $G^{r,c}$}

Again, without loss, $S \ = \ D$, a diagonal matrix with  eigenvalues $\lambda_1 < \ldots < \lambda_n$.
Now $\cD  \ = \ \Oo_I \times \RR$ has faces of the form $E_i \times \RR$ and the
box
\[ \Pp = \Pp_G = (-\infty, \lambda_1] \times [\lambda_1,\lambda_2] \times
[\lambda_2,\lambda_3] \times \ldots \times [\lambda_n, \infty)  \subset
\RR^{n+1} \]
has $2n$ faces. Recall that all numbers in sight are real.
Define
\[
T = T(v,c)  \ = \ \begin{pmatrix}
	D & v \\
	v^\ast & c
\end{pmatrix} \ .
\]

We must show that the map
$ \tilde G : \cD  \to \RR^{n+1} , \
( v , c ) \mapsto \sigma_o (T(v,c)) = \sigma_o (T)$
defines a homeomorphism $G: \cD  \to \partial \Pp$. This time, as we shall see,
$G$ takes every face of $\partial \cD $ to two adjoining faces of $\partial
\Pp$.

As before $\cD_d$ consists of the points $(v,c) \in \cD $ for which $T(v,c)$ has
a double eigenvalue,  the critical set $\cC$ is the set of points in the
interior of $\cD \setminus \cD_d $ in which $G$ is differentiable with not invertible Jacobian,
and its complement in $\cD $ is the set of regular points. The counterpart of
Proposition \ref{prop:fronteira} still holds.

\begin{proposition} \label{prop:fronteirabord}
	 (i) $\cD_d \subset \partial \cD $, so that $\widetilde G$ is differentiable in $\interior \cD$,  (ii)  $\cC \ = \ \emptyset$, (iii) $\partial \tilde G(\cD ) \subset \tilde G(\partial \cD)$,
	(iv)
	The matrices $D$ and $T(v,c)$ share an eigenvalue $\lambda_i$ if and only
	if $v \in E_i$ and $(v,c) \in \partial \cD $. Thus $\tilde G^{-1}(\partial \cD ) \subset \partial \cD$.
\end{proposition}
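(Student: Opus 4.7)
The plan is to mirror the proof of Proposition \ref{prop:fronteira}, handling the extra slack coordinate $c$ by exploiting the additional row of the eigenvector equation. I would prove (iv) first, since both (i) and (ii) reduce to it; then derive (i) by picking a convenient eigenvector in the double eigenspace; then verify (ii) by computing the Jacobian explicitly; finally, (iii) is a direct consequence of the inverse function theorem.

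For (iv), write any common eigenvector of $T(v,c)$ for eigenvalue $\lambda_i$ as $(y,t)\in\RR^n\times\RR$. The eigenvalue equation splits as $(D-\lambda_i)y = -t v$ and $\langle v,y\rangle = (\lambda_i-c)\,t$. Taking the $i$-th coordinate of the first equation gives $0 = -t\,v_i$, so either $v_i = 0$, or $t = 0$; the latter forces $y$ parallel to $e_i$, and then $\langle v,y\rangle = 0$ yields $v_i=0$ again. Conversely, if $v_i=0$, then $(e_i,0)\in\RR^{n+1}$ is an eigenvector of $T(v,c)$ with eigenvalue $\lambda_i$. For (i), if $\mu$ is a double eigenvalue of $T(v,c)$, its 2-dimensional eigenspace meets the hyperplane $\{t=0\}$ nontrivially and so contains a nonzero vector $(y,0)$; the system $Dy = \mu y$, $\langle v,y\rangle = 0$ then forces $\mu=\lambda_j$ and $v_j=0$ for some $j$, placing $(v,c)\in \partial\cD$.

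For (ii), let $(v,c)\in\interior\cD$ and take orthonormal eigenvectors $w_i=(y_i,t_i)$ of $T(v,c)$ with distinct eigenvalues (distinct by (i)). Differentiating in the direction $\dot T = \begin{pmatrix} 0 & \dot v \\ \dot v^\ast & \dot c \end{pmatrix}$ produces the explicit Jacobian
\[ J(v,c)(\dot v,\dot c) \e \bigl(\,2\,t_i\,\langle \dot v, y_i\rangle \,+\, t_i^2\,\dot c\,\bigr)_{i=1}^{n+1}. \]
By (iv), no $t_i$ vanishes on $\interior\cD$: otherwise $(y_i,0)$ would give $D$ and $T(v,c)$ the shared eigenvalue $\mu_i$. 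A nontrivial kernel element $(\dot v,\dot c)$ would then satisfy $\langle \dot v, y_i\rangle = -\dot c\,t_i/2$. Expanding $(\dot v,0)\in\RR^{n+1}$ in the orthonormal basis $\{w_i\}$ yields coefficients $\alpha_i=\langle \dot v,y_i\rangle$, and matching the last coordinate gives $\sum_i \alpha_i\,t_i = 0$, that is, $\dot c\sum_i t_i^2 = 0$. Using $\sum_i t_i^2 = 1$ (obtained by expanding $(0,1)$ in the same basis) forces $\dot c = 0$, hence all $\alpha_i$ vanish and $\dot v = 0$, a contradiction.

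The main obstacle I foresee is the Jacobian analysis in (ii): one has to spot the normalization identity $\sum_i t_i^2 = 1$ in order to collapse the linear system. Everything else is a direct translation of the rank-one case. With (i) and (ii) in hand, (iii) is immediate: the inverse function theorem makes $\tilde G$ a local $C^1$ diffeomorphism on $\interior \cD$, so $\tilde G(\interior \cD)$ is open in $\RR^{n+1}$ and every point of $\partial \tilde G(\cD)$ is the image of a point of $\partial\cD$.
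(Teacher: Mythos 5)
Your proof is correct, and for part (ii) it takes a genuinely different route from the paper. For (iv), you manipulate the eigenvector equation $(T(v,c)-\lambda_i)(y,t)=0$ componentwise, whereas the paper simply expands $\det(T(v,c)-\lambda_i I)$ along row $i$; both are elementary and equivalent. The real divergence is in (ii): the paper reformulates criticality of $(v,c)$ as the failure of $e_{n+1}$ to be a cyclic vector of $T$, then reduces this to the vanishing of a Vandermonde-weighted determinant, and finally reads off that some eigenvector has zero last coordinate. You instead analyze the kernel of the Jacobian directly: writing $w_i=(y_i,t_i)$, you observe $t_i\neq 0$ throughout $\interior\cD$ (a consequence of (iv), which is also the pivotal fact in the paper's argument), expand $(\dot v,0)$ and $(0,1)$ in the orthonormal eigenbasis, and use the Parseval identity $\sum_i t_i^2 = 1$ to collapse the system and force $\dot c=0$ and then $\dot v=0$. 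This is more self-contained — it avoids the cyclic-vector machinery and Vandermonde step — though the paper's characterization is a standard tool that exposes additional structure (the criticality set is exactly where $e_{n+1}$ fails to be cyclic). Your treatment of (i) matches the paper's (pick an eigenvector in the double eigenspace with vanishing last coordinate), and your one-line justification of (iii) via the inverse function theorem is at the same level of detail as the paper's; strictly speaking both implicitly use properness of $\tilde G$ to ensure $\partial \tilde G(\cD)\subset \tilde G(\cD)$, which is established separately as hypothesis (H1).
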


\medskip
\begin{proof}  For (i), take a double
	eigenvalue $\rho$ and an associated eigenvector $w \in \RR^{n+1}$
	with $w_{n+1} \ = \ 0$. Expanding $(T(v,c) - \rho) w = 0$ we have that $\rho
	= \lambda_i$ for some $i = 1, \ldots, n$, is an eigenvalue of $D$ and $w = t e_i, t \ne 0$. But then
	$(T(v,c) - \lambda_i) e_i \ = \ 0$ implies that $v_i = 0$, so that $v \in E_i$. For (ii), imitate the argument in the previous section:
	
	\[ DG(v,c) (\dot v, \dot c) \ = \ \big( \langle w_1, \dot T  w_1 \rangle,
	\ldots, \langle w_{n+1}, \dot T w_{n+1} \rangle \big) , \]
	where $T(v,c)\ w_j = \lambda_j\ w_j,\ j=1, \ldots, n+1, \ \| w_j \| \ = \ 1$ and
	\[ \dot T \ = \ {\dot T}(v,c)(\dot v, \dot c) \ = \ \begin{pmatrix} 0 & \dot v
	\\
		\dot v^\ast & \dot c\end{pmatrix} \ .\]
	Let $\dot V\simeq \RR^{n+1}$ be the vector space spanned by the
	matrices $\dot T$.

	Using Frobenius inner products, the Jacobian $DG: \dot V \to \RR^{n+1}$ becomes
	\[ DG(v,c) (\dot v, \dot c)  \ = \ \big(\tr (w_1 \otimes w_1) \dot T, \ldots,
	\tr (w_{n+1} \otimes w_{n+1}) \dot T \big)\]
	\[\ = \ \big(\langle \dot T , w_1 \otimes w_1 \rangle, \ldots, \langle \dot T , w_{n+1}
	\otimes w_{n+1}\rangle \big) . \]
	Thus, a point $(v,c) \in \interior \cD $ is critical if and only if every
	linear combination of the eigenprojections $w_k \otimes w_k$ is orthogonal to some nonzero matrix $\dot T \in \dot V$. As $T$ has simple spectrum (by (i)), such linear combination is a polynomial in $T$,
 \[ \sum_{j=1}^{n+1} c_j\ w_j \otimes w_j \ = \
	\sum_{k=0}^{n} d_k\ T^k \ .\]
Thus there is $\dot T$ orthogonal to all polynomial functions $p(T)$.
	The inner product of $\dot T \in \dot V$ with an arbitrary real
	symmetric matrix $M$  is simply
	\begin{equation*}
		\langle \dot T , \begin{pmatrix} * & y \\
			y^\ast & x \end{pmatrix} \rangle \ = \
\langle \begin{pmatrix} 0 & \dot v
			\\
			 \dot v^\ast &  \dot c \end{pmatrix} , \begin{pmatrix} * & y \\
			y^\ast & x \end{pmatrix} \rangle \ = \
\langle (2\dot v, \dot c), (y,x) \rangle \ = \  \langle (2\dot v, \dot c ), M e_{n+1} \rangle \ ,
	\end{equation*}
	where $e_{n+1} \ = \ (0, \ldots, 1) \in \RR^{n+1}$ is canonical.
	 Thus, a point $(v,c)$ corresponding to a
	matrix $T \ = \ T(v,c)$ is critical if and only if there is a matrix $\dot T$
	associated with a nonzero $(\dot v, \dot c)$ such that $e_{n+1}, Te_{n+1},
	\ldots, T^n e_{n+1}$ are orthogonal to $(\dot v, \dot c)$. %The inner
	%product with $e_{n+1}$ gives $\dot c = 0$.
	
	Thus $T$ is critical if and only if the vectors $e_{n+1}, Te_{n+1}, \ldots, T^n
	e_{n+1}$ are linearly dependent, i.e., $e_{n+1}$ is not a cyclic vector of
	$T$. Diagonalize $T \ = \ Q_T^T D_T Q_T$, where the rows of the orthogonal
	matrix $Q_T$ are the eigenvectors of $T$ and $D_T$ has simple spectrum,
	from (i). The vectors
	$e_{n+1}, Te_{n+1}, \ldots, T^n e_{n+1}$ are linearly dependent if and only
	if
	the vectors $Q_T e_{n+1}, D_T Q_T e_{n+1}, \ldots, D_T^n Q_T e_{n+1}$ are.
Let $M$ be the matrix having such vectors as columns and define
\[ q = Q_T e_{n+1} \ , \quad D_T= \diag (d_1, d_2, \ldots, d_{n+1}) \ . \]
 Then
\[ M = \begin{pmatrix}
q_1 & 0 & 0 & \ldots & 0 \\
0 & q_2 & 0 & \ldots & 0 \\
 &  & \ldots &  &  \\
0 & 0 & 0 & \ldots & q_{n+1} \\ \end{pmatrix}  \begin{pmatrix}
1 & d_1 & d_1^2 & \ldots & d_1^{n} \\
1 & d_2 & d_2^2 & \ldots & d_2^{n} \\
 &  & \ldots &  &  \\
1 & d_{n+1} & d_{n+1}^2 & \ldots & d_{n+1}^{n+1} \\ \end{pmatrix}  \]
	Since the $d_i$'s are distinct, the (Vandermonde) determinant of the matrix on the right is nonzero and $\det M$ is zero if and only if
	some coordinate $q_i$ of $Q^T e_{n+1}$ is.
	Said differently, the last coordinate of some eigenvector of $T$ is zero. Say $w
	\ = \
	( \tilde w,0)$ satisfies $(T - \lambda_i) w = 0$. Then $(D - \lambda_i)
	\tilde
	w = 0$, so that $\lambda_i$ is also an eigenvalue of $D$. Since $D$ has
	simple
	spectrum, we must have $\tilde w = \alpha e_k$ for some $\alpha \ne 0$ and
	$e_k
	\in \RR^n$ a canonical vector, Thus, without loss, $w \ = \ (e_k,0)$. Equating the $(n+1)$-th entry $(n+1)$ of $(T - \lambda_i) w = 0$, we obtain $T_{k, n+1} =
	T(v,c)_{k, n+1} = v_k \ = \ 0$. Thus $v_k \in E_k$ and $(v,c) \in \partial
	\cD $.
	The proof of (ii) is complete.

Item (iii) again follows from (ii) and the inverse function theorem.
	
	To prove (iv), simply expand $\det( T(v, c) - \lambda_i I)$ along row $i$.
	\qed
\end{proof}

\medskip
Again, statements (i) and (ii) above imply hypothesis (H2) of Lemma \ref{lemma:topo}. Hypothesis (H1) is proved mimicking Proposition  \ref{prop:properF2}, but (H3) is more delicate.

\begin{prop} \label{prop:H3G} (H3) holds for $G$:
for some $v \in \interior \cD$, $G(v) \in \interior \Pp$.
\end{prop}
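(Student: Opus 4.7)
My plan is to produce an explicit interior point. Take any $v \in \interior \Oo_I$ (for instance $v = \mathbf{1}$, all coordinates equal to $1$) and any $c \in \RR$, and show directly that the eigenvalues of $T(v,c)$ strictly interlace the $\lambda_i$, so that $\sigma_o(T(v,c))$ lies in $\interior \Pp_G = (-\infty,\lambda_1) \times (\lambda_1,\lambda_2) \times \cdots \times (\lambda_n,\infty)$.

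The key tool is the secular equation. For $\mu \notin \{\lambda_1,\ldots,\lambda_n\}$, $D - \mu I$ is invertible, so the Schur complement gives
\[
\det(T(v,c) - \mu I) \e \prod_{i=1}^n (\lambda_i - \mu) \cdot \Bigl( c - \mu - \sum_{i=1}^n \frac{v_i^2}{\lambda_i - \mu}\Bigr).
\]
Since every $v_i \neq 0$, no $\lambda_i$ is a root of the characteristic polynomial (the factor in brackets blows up there). Thus every eigenvalue $\mu$ of $T(v,c)$ satisfies
\[
g(\mu) \ :=\ c - \mu + \sum_{i=1}^n \frac{v_i^2}{\mu - \lambda_i} \e 0.
\]

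On each open interval $(\lambda_i,\lambda_{i+1})$, $g$ is smooth and, because $v_i,v_{i+1}\neq 0$, satisfies $g(\mu)\to+\infty$ as $\mu\to\lambda_i^+$ and $g(\mu)\to-\infty$ as $\mu\to\lambda_{i+1}^-$, so $g$ has at least one zero there. Likewise $g\to+\infty$ as $\mu\to-\infty$ and $g\to-\infty$ as $\mu\to\lambda_1^-$, giving a zero in $(-\infty,\lambda_1)$; and $g\to+\infty$ as $\mu\to\lambda_n^+$ and $g\to-\infty$ as $\mu\to+\infty$, giving a zero in $(\lambda_n,+\infty)$. This produces $n+1$ distinct zeros in disjoint open intervals, which must account for all $n+1$ eigenvalues of $T(v,c)$.

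The strict interlacing $\mu_1 < \lambda_1 < \mu_2 < \lambda_2 < \cdots < \lambda_n < \mu_{n+1}$ then places $\tilde G(v,c) = \sigma_o(T(v,c))$ in $\interior \Pp_G$, while $(v,c) \in \interior \cD$ since all $v_i > 0$. There is no real obstacle here; the only thing to be careful about is checking the asymptotics of $g$ at each singularity $\lambda_i$, which relies precisely on $v_i > 0$, i.e.\ on choosing $v$ in the \emph{interior} of $\Oo_I$.
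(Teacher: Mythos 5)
Your argument is correct, and it takes a genuinely different route from the paper's. The paper produces a single interior point by a perturbative argument: it fixes $c = \lambda_{n+1} > \lambda_n$, considers $T(t\mathbf{1}, c)$ for small $t>0$, observes that the first derivatives $\dot\lambda_j(0)$ vanish, and then computes second derivatives $\ddot\lambda_j(0)$ (using the formula for $\dot w_j$ in terms of the reduced resolvent) to show each $\lambda_j(t)$ moves strictly into the correct open interval. Your proof instead analyzes the secular (Weinstein--Aronszajn) function $g(\mu) = c - \mu + \sum_i v_i^2/(\mu-\lambda_i)$ and uses the sign changes forced by $v_i \neq 0$ to locate $n+1$ roots strictly interlacing the $\lambda_i$; since the characteristic polynomial has degree $n+1$ and no $\lambda_i$ is a root when $v\in\interior\Oo_I$, these account for the entire spectrum. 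This is more elementary (no eigenvector derivatives) and actually proves more: it shows every point of $\interior\cD$, not just one, is sent to $\interior\Pp_G$, which also yields (H5) directly without appealing to Proposition \ref{prop:fronteirabord}(iv). One small phrasing quibble: saying "the factor in brackets blows up" at $\mu=\lambda_i$ does not by itself rule out $\lambda_i$ as a root, since the Vandermonde-type prefactor vanishes there; the clean justification is that $\det(T-\lambda_i I) = -v_i^2\prod_{j\neq i}(\lambda_j-\lambda_i) \neq 0$, but the conclusion is the same.
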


\begin{proof} Fix $\lambda_{n+1} = c > \lambda_n$ and consider $\bum = (1,1, \ldots, 1) \in \RR^n$,   $t >0$,
\[
T = T(t)  \ = \ \begin{pmatrix}
	D & 0 \\
	0 & \lambda_{n+1}
\end{pmatrix} + t \begin{pmatrix}
	0 & \bum \\
	\bum^\ast & 0
\end{pmatrix} \ .
\]
 For eigenvalues $\lambda_j(t)$, $j=1, \ldots, n+1$, of $T(t)$,
 \[ \dot \lambda_j (t=0) = \langle \begin{pmatrix}
	0 & \bum \\
	\bum^\ast & 0
\end{pmatrix} \ e_j, e_j \rangle = 0 \]
and we must compute second derivatives. Define normalized eigenvectors $w_j(t)$ such that $T(t) w_j(t) = \lambda_j(t) w_j(t)$, $w_j(0) = e_j$. Then (\cite{Lax})
\[  \dot \lambda_j (t) = \langle \begin{pmatrix}
	0 & \bum \\
	\bum^\ast & 0
\end{pmatrix} \ w_j(t), w_j(t) \rangle  \ , \quad \dot w_j(t) \ = - (T(t) - \lambda_j(t))^{-1} (\dot T(t) - \dot \lambda_j(t)) w_j(t) \ , \]
where the map being inverted is the restriction
$  T(t) - \lambda_j(t): \{ w_i\}^\perp \to \{ w_j\}^\perp$.
For $t = 0$, as $\dot \lambda_j(0)=0$ for $j=1, \ldots, n+1$,
\[ \dot w_j(0) \ = - \begin{pmatrix}
	D  - \lambda_j& 0 \\
	0 & \lambda_{n+1} - \lambda_j
\end{pmatrix}^{-1} \begin{pmatrix}
	0 & \bum \\
	\bum^\ast & 0
\end{pmatrix}  e_j  \]
so that, for $j=1, \ldots, n$ and $j= n+1$ we have, respectively,
\[ \dot w_j (0) = - \frac{1}{\lambda_{n+1}- \lambda_j} \ e_{n+1} \ \ \hbox{or} \ \ \dot w_{n+1}(0) = - \begin{pmatrix}
	(D - \lambda_{n+1})^{-1} \bum\\
	 0
\end{pmatrix} \ . \]
We are ready to compute the second derivative of $\lambda_j$ at $t=0$,
\[ \ddot \lambda_i (0) \ = \ 2  \langle \begin{pmatrix}
	0 & \bum \\
	\bum^\ast & 0
\end{pmatrix} \ \dot w_i(0), w_j(0) \rangle \ .\]
For $j=1, \ldots, n$, as $\lambda_{n+1} >  \lambda_i$,
\[ \ddot \lambda_j (0) \ = \ -2  \langle \begin{pmatrix}
	0 & \bum \\
	\bum^\ast & 0
\end{pmatrix} \  \frac{1}{\lambda_{n+1}- \lambda_j} \ e_{n+1}, e_j \rangle  \ = \
 \frac{-2}{\lambda_{n+1}- \lambda_i} < 0 \ . \]
 For $j = n+1$,
\[ \ddot \lambda_{n+1} (0) = -2  \ \langle \begin{pmatrix}
	0 & \bum \\
	\bum^\ast & 0
\end{pmatrix} \   \begin{pmatrix}
	(D - \lambda_{n+1})^{-1} \bum\\
	 0
\end{pmatrix}, e_{n+1} \rangle  = (-2) \langle \bum, (D - \lambda_{n+1})^{-1} \bum \rangle > 0 \]
and  $\sigma_o(T(t))$ indeed belongs to $\interior \Pp$ for small $t > 0 $.
\qed
\end{proof}

\medskip
\noi{\bf Proof of Theorem \ref{teorema} for $G$, $G^c$ and $G^{r,c}$:}
We first use  induction to prove the claim for $G$. For $n \ = \ 1$, $\cD  \ = \ [0, \infty) \times \RR$, so
that  $\partial \cD = \{(0, c), c \in \RR\}$. The
eigenvalues of $T(0,c)$ are $\{\lambda_1, c\}$ and must be ordered. If $c
< \lambda_1$ then
\[ G(0,c) = (c, \lambda_1) \in (-\infty, \lambda_1] \times \{\lambda_1\} \ . \]
If $c >\lambda_1$ then $G(0,c) = (\lambda_1, c) \in \{\lambda_1\} \times
[\lambda_1, \infty)$.
If $c \ = \ \lambda_1$, $G(0,c)$ lies in the common subface, a single point of
double spectrum associated with the diagonal matrix $\lambda_1 I$.  Again, it
is the ordering which creases $\partial \cD $, a straight line, so as to cover
both faces of $\Pp$. The first inductive step is complete.

Take a diagonal $n \times n$ matrix $D$: we consider $G(\partial \cD )$. The
$i$-th face of $\partial \cD$ is $E_i  \times \RR$. Let
\[ L_{i} \ = \ (-\infty, \lambda_1] \times \ldots \times [\lambda_{i-1},
\lambda_i] \ , \ R_i \ = \  [\lambda_{i}, \lambda_{i+1}] \times \ldots \times
[\lambda_n, \infty) \ , \] where again sets with indices not in $\{1, \ldots, n\}$ are omitted in formulas.
Following the argument in the previous section,  for each face $E_i \times \RR, \ i=1, \ldots, n$,
\[  G(E_i) \ = \  \big(L_{i} \times
\{ \lambda_i \} \times R_{i+1} \big) \ \cup \ \big( L_{i-1} \times \{ \lambda_{i} \} \times R_{i} \big) \ . \]
Thus, $G:\partial \cD  \to \partial \Pp$ is surjective, and restricts
injectively to each face $E_i \times \RR$. We are left with showing injectivity
on the union of the faces.

For $(v,c) \in E_i \times \RR$ set $M = T(v,c)$ and consider another bordered
matrix $N = T(w, d)$ with the same ordered spectrum. As $\lambda_i$ belongs to
$\lambda(M)$, we must have $\lambda_i \in \lambda(N)$ and, from the previous
lemma, $w_i =0$: $(w,v) \in E_i \times \RR$. Global injectivity now follows
from injectivity of $G$ restricted to $E_i \times \RR$.

The claim about $G^c$ follows the proof of the claim about $F^r$: compare traces. For $G^{r,c}$,  compare traces of squares,
\[ \sum_j \mu_j^2 = \tr T(v,c)^2 = \tr D^2 + 2 \langle v, v \rangle + c^2  = \sum_k \lambda_k^2 + 2 r ^2 + c^2  \]
and imitate the rest of the argument relating two surfaces of codimension 2.
\qed

\section{Proof of Theorem \ref{teoremadois}} \label{provateoremadois}

We consider $\hat F : \KK^n \to \Pp_F$, the other cases being analogous.

\bigskip

From Theorem \ref{teorema}, as $F$ is injective, $F(\abs(v)) = F(\abs(w))$ if and only if $v = w$. From the surjectivity of $F$, given $\mu \in \Pp_F$, there is a (unique) $v \in \Oo_Q$ for which $F(v) = \mu$. Hence,  $\hat F^{-1}(\mu) = \abs^{-1} \circ F^{-1}(\mu)=\abs^{-1}(v)$.

Each nonzero coordinate $v_k$ of $v$ gives rise to a circle $e^{i \theta_k} v_k$ of possible values for the $k$-th coordinate of $\abs^{-1}(v)$. Clearly $z_k = 0$ if and only if $z \in E_k$. \qed

\bibliographystyle{siamplain}

\end{document}